\newtheorem{lemma}{Lemma}[section]
\newtheorem{thm}[lemma]{Theorem}
\newtheorem{rem}[lemma]{Remark}
\newtheorem{prop}[lemma]{Proposition}
\newcommand\matZ{{\mathbb{Z}}}
\newcommand\matC{{\mathbb{C}}}
\newcommand\Sigmatil{{\widetilde\Sigma}}
\newcommand\gtil{{\widetilde{g}}}
\renewcommand{\hbar}{{\overline{h}}}
\newfont{\Got}{eufm10 scaled 1200}
\newcommand{\permu}{{\hbox{\Got S}}}
\newcommand{\compo}{\,{\scriptstyle\circ}\,}
\newcommand{\mycap} [1] {\caption{\footnotesize{#1}}}
\newcommand{\primo}{\textrm{I}}
\newcommand{\secon}{\textrm{I\!I}}
\newcommand{\terzo}{\textrm{I\!I\!I}}
\newcommand{\quart}{\textrm{I\!V}}
\newcommand{\sistema}[1]{\left\{\begin{array}{l} #1 \end{array}\right.}
\begin{document}

\title{Explicit computation of some\\ families of Hurwitz numbers, II}

\author{Carlo~\textsc{Petronio}\thanks{Partially supported by INdAM through GNSAGA, by
MIUR through PRIN ``Real and Complex Manifolds: Geometry, Topology and Harmonic Analysis''
and by UniPI through PRA 2018/22 ``Geometria e Topologia delle variet\`a''}}

\maketitle

\begin{abstract}\noindent
We continue our computation, using a combinatorial method based on Gronthendieck's \emph{dessins d'enfant},
of the number of (weak) equivalence classes of surface branched covers matching certain specific branch data.
In this note we concentrate on data with the surface of genus $g$ as source surface,
the sphere as target surface,
$3$ branching points, degree $2k$, and local degrees over the branching points
of the form $[2,\ldots,2]$, $[2h+1,3,2,\ldots,2]$, $\pi=\left[d_i\right]_{i=1}^\ell$.
We compute the corresponding (weak) Hurwitz numbers
for several values of $g$ and $h$, getting explicit arithmetic formulae in terms of the $d_i$'s.

\smallskip

\noindent MSC (2010): 57M12.
\end{abstract}

\noindent
This paper is a continuation of~\cite{x1}, and it is based on the same methods, but the results
that we obtain here refer to a topologically more complex situation, so the required arguments
are more elaborate.
In this introduction we quickly review the subject matter, making the paper independent of~\cite{x1},
and we state our results.

\paragraph{Surface branched covers} A surface branched cover is a
map $$f:\Sigmatil\to\Sigma$$
where $\Sigmatil$ and $\Sigma$ are closed and connected surfaces and $f$ is locally modeled
on maps of the form
$$(\matC,0)\ni z\mapsto z^m\in(\matC,0).$$
If $m>1$ the point
$0$ in the target $\matC$ is called a \emph{branching point},
and $m$ is called the local degree at the point $0$ in the source $\matC$.
There are finitely many branching points, removing which, together
with their pre-images, one gets a genuine cover of some degree $d$.
If there are $n$ branching points, the local degrees at the points
in the pre-image of the $j$-th one form a partition $\pi_j$ of $d$ of some
length $\ell_j$, and the following Riemann-Hurwitz relation holds:
$$\chi\left(\Sigmatil\right)-(\ell_1+\ldots+\ell_n)=d\left(\chi\left(\Sigma\right)-n\right).$$
Let us now call \emph{branch datum} a 5-tuple
$$\left(\Sigmatil,\Sigma,d,n,\pi_1,\ldots,\pi_n\right)$$
and let us say it is \emph{compatible} if it satisfies the Riemann-Hurwitz relation.
(For a non-orientable $\Sigmatil$ and/or $\Sigma$ this relation
should actually be complemented with certain other necessary conditions,
but we restrict to an orientable $\Sigma$ in this paper, so we do not
spell out these conditions here.)

\paragraph{The Hurwitz problem}
The very old \emph{Hurwitz problem} asks which compatible branch data are
\emph{realizable} (namely, associated to some existing surface branched cover)
and which are \emph{exceptional} (non-realizable).
Several partial solutions
to this problem have been obtained over the time, and we quickly
mention here the fundamental~\cite{EKS}, the survey~\cite{Bologna}, and
the more recent~\cite{Pako, PaPe, PaPebis, CoPeZa, SongXu}.
In particular, for an orientable $\Sigma$ the problem has been shown
to have a positive solution whenever $\Sigma$ has positive genus.
When $\Sigma$ is the sphere $S$, many realizability and exceptionality
results have been obtained (some of experimental nature), but the general
pattern of what data are realizable remains elusive. One guiding
conjecture in this context is that \emph{a compatible branch datum is always
realizable if its degree is a prime number}. It was actually shown in~\cite{EKS}
that proving this conjecture in the special case of $3$ branching
points would imply the general case. This is why many efforts have
been devoted in recent years to investigating the realizability
of compatible branch data with base surface $\Sigma$ the sphere $S$ and having $n=3$
branching points. See in particluar~\cite{PaPe, PaPebis} for some evidence
supporting the conjecture.

\paragraph{Hurwitz numbers}
Two branched covers
$$f_1:\Sigmatil\to\Sigma\qquad f_2:\Sigmatil\to\Sigma$$
are said to be \emph{weakly equivalent} if there exist homeomorphisms $\gtil:\Sigmatil\to\Sigmatil$
and $g:\Sigma\to\Sigma$
such that $f_1\compo\gtil=g\compo f_2$, and \emph{strongly equivalent} if
the set of branching points in $\Sigma$ is fixed once and forever and
one can take $g=\textrm{id}_\Sigma$.
The \emph{(weak or strong) Hurwitz number} of a compatible
branch datum is the number of (weak or strong) equivalence classes of branched covers
realizing it. So the Hurwitz problem can be rephrased as the question whether a
Hurwitz number is positive or not (a weak Hurwitz number can be smaller
than the corresponding strong one, but they can only vanish simultaneously).
Long ago Mednykh in~\cite{Medn1, Medn2} gave some formulae for the computation
of the strong Hurwitz numbers,
but the actual implementation of these formulae is rather elaborate in general.
Several results were also obtained in more recent years in~\cite{GKL, KM1, KM2, KML, MSS}.

\paragraph{Computations}
In this paper we consider branch data of the form
$$\leqno{(\heartsuit)}
\left(\Sigmatil,\Sigma=S,d=2k,n=3,
[2,\ldots,2],[2h+1,3,2,\ldots,2],\pi=\left[d_i\right]_{i=1}^\ell\right)$$
for $h\geqslant0$.  Here we employ square brackets to denote an unordered array
of integers with repetitions.
A direct calculation shows that such a datum is compatible
for $h\geqslant 2g-1$, where $g$ is the genus of $\Sigmatil$, and $\ell=h-2g+2$.
We compute the weak Hurwitz number of the datum
for $g=0,1,2$ and for some of the smallest possible $h$'s, namely in the following cases:
for $g=0$ and $h=0,1,2$;
for $g=1$ and $h=1,2$; for $g=2$ and $h=3$.
More values could be obtained using the same techniques as we
employ below, but the complication of the topological and
combinatorial situation grows very rapidly, and the arithmetic
formulae giving the weak Hurwitz numbers are likely to be rather intricate
for larger values of $g$ and/or $h$.

\medskip

We will denote by $T$ the torus and by $2T$ the genus-2 surface.

\begin{thm}\label{genus0:thm}
\begin{itemize}
\item $(g=0,\ h=0)$\quad The number of weakly inequivalent realizations of
$$(S,S,2k,3,[2,\ldots,2],[1,3,2,\ldots,2],\pi)$$
(with $\ell(\pi)=2$) is $0$ if $\pi$ contains $k$, and $1$ otherwise.
\item $(g=0,\ h=1)$\quad The number of weakly inequivalent realizations of
$$(S,S,2k,3,[2,\ldots,2],[3,3,2,\ldots,2],\pi)$$
(with $\ell(\pi)=3$) is $0$ if $\pi$ contains $k$, and $1$ otherwise.
\item $(g=0,\ h=2)$\quad The number $\nu$ of weakly inequivalent realizations of
$$(S,S,2k,3,[2,\ldots,2],[5,3,2,\ldots,2],\pi)$$
(with $\ell(\pi)=4$) is as follows:
\begin{itemize}
\item If $\pi=[p,p,p,p]$ or $\pi=[p,p,q,q]$ for distinct $p,q$ then $\nu=0$;
\item If $\pi=[p,p,p,q]$ for distinct $p,q$ then $\nu=0$ if $k$ is in $\pi$, and $\nu=1$ otherwise;
\item If $\pi=[p,p,q,r]$ for distinct $p,q,r$ then $\nu=1$ if $k$
is in $\pi$ or the sum of two entries of $\pi$, and $\nu=3$ otherwise;
\item If $\pi=[p,q,r,s]$ for distinct $p,q,r,s$  then $\nu=2$ if $k$
is the sum of two entries of $\pi$, while $\nu=3$ if $k$ is in $\pi$, and $\nu=6$ otherwise.
\end{itemize}
\end{itemize}
\end{thm}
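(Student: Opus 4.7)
The plan is to use the dessin d'enfant technique of~\cite{x1} to translate the enumeration of weakly inequivalent realizations of $(\heartsuit)$ into a combinatorial count of embedded graphs in $\Sigmatil$. Given a cover $f:\Sigmatil\to S$, choose an arc $\gamma\subset S$ joining the two branching points carrying the partitions $[2,\ldots,2]$ and $[2h+1,3,2,\ldots,2]$; then $D:=f^{-1}(\gamma)\subset\Sigmatil$ is a bipartite graph whose white vertices are the $k$ preimages of the first branching point (all of valence $2$) and whose black vertices are the preimages of the second (one of valence $2h+1$, one of valence $3$, and $k-h-2$ of valence $2$), and whose complementary regions correspond to the preimages of the third branching point, with a preimage of local degree $d_i$ giving a region bounded by a cycle of length $2d_i$ in $D$. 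Since the three partitions are pairwise distinct, two covers are weakly equivalent if and only if the corresponding dessins are related by an orientation-preserving self-homeomorphism of $\Sigmatil$.

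Next I would simplify $D$: each white vertex, being of valence $2$, collapses to an ordinary interior point of an edge, producing a graph $G\subset\Sigmatil$ with $k-h$ vertices and $k$ edges; the $k-h-2$ valence-$2$ black vertices of $G$ can then be suppressed (and recorded as marked points on their edges), leaving the \emph{skeleton} $G'\subset\Sigmatil$ whose only vertices are $V_{2h+1}$ and $V_3$. This skeleton has $h+2$ edges and $\ell=h-2g+2$ faces, and recovering $D$ from $G'$ amounts to choosing nonnegative integers $(a_e)$ summing to $k-h-2$, one per edge of $G'$, recording the number of suppressed valence-$2$ vertices on each edge. If a face $F$ of $G'$ is bounded by edges $e_1,\ldots,e_m$ (an edge counted twice when it is a bridge for $F$), the corresponding face of $D$ has boundary length $\sum_{i=1}^m(a_{e_i}+1)$, and this must equal one of the parts of $\pi$.

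With this setup, for each of the six cases I would carry out the following steps: (i) enumerate, up to orientation-preserving homeomorphism of $\Sigmatil$, the embedded skeletons $G'$ with the prescribed valences and face count; (ii) compute, for each, the automorphism group and its action on the set of faces; (iii) for each assignment of the parts of $\pi$ to the faces (modulo this action), decide whether the resulting linear system in the $a_e$ admits a nonnegative integer solution. For $(g=0,h=0)$ the unique skeleton is a loop at $V_3$ with a pendant edge to $V_1$, whose two $G'$-faces have length $1$ and $3$ respectively; the face-lengths $a_\ell+1$ and $a_\ell+2a_p+3$ (with $a_\ell,a_p$ the numbers of valence-$2$ vertices on the loop and on the pendant) immediately yield the stated dichotomy. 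For $(g=0,h=1)$ there are exactly two skeletons (the theta graph and the dumbbell); the theta realizes those $\pi$ with all parts smaller than $k$, the dumbbell realizes those with a single part larger than $k$, and these two cases jointly exhaust the partitions with $k\notin\pi$.

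The main obstacle is the $(g=0,h=2)$ case, where $G'$ has vertices of valences $5$ and $3$ joined by either one or three edges (the remaining half-edges forming loops), each choice producing several inequivalent orientation-preserving embeddings in $S$ with four faces and with nontrivial automorphism groups acting on those faces. For every such embedded skeleton I must write down the face-length system, identify which assignments of the parts of $\pi$ to faces admit nonnegative $a_e$'s, and then sum over skeletons after quotienting by the face-wise action of the automorphism groups. The finer case distinction in the statement (according to whether $k$ is a part of $\pi$, a sum of two parts, or neither, and according to the multiplicity pattern of $\pi$) corresponds exactly to which realizable assignments exist and to how the symmetries identify them into a single weak equivalence class.
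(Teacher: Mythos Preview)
Your approach is essentially the same as the paper's: reduce the dessin to a skeleton on the two high-valence vertices with positive integer edge weights, enumerate the embeddings of this skeleton in $S$, and for each embedding solve the linear systems expressing the face lengths in terms of the weights. The paper finds one $(1,3)$ skeleton, two $(3,3)$ skeletons (your theta and dumbbell, called $\secon$ and $\primo$ there), and three embedded $(5,3)$ skeletons; it then carries out the case analysis for $h=2$ in full, splitting $\pi$ according to its multiplicity pattern exactly as in the statement.

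There is, however, a genuine gap in your setup. Your sentence ``Since the three partitions are pairwise distinct'' is false: for several values of $k$ two of the three partitions in the datum coincide, for instance $[2,2,2],[3,3],[2,2,2]$ when $h=1,k=3$, or $[2,2,2,2],[3,3,2],[3,3,2]$ when $h=1,k=4$, or $[2,\ldots,2],[5,3,2,2],[5,3,2,2]$ when $h=2,k=6$. In those cases weak equivalence is \emph{not} generated by homeomorphisms of $\Sigmatil$ alone; the extra generators of the equivalence $\sim$ (colour swap and the black-vertex-preserving ``region move'') can identify distinct skeletons, and you must check that they do not collapse the count. The paper lists these exceptional data explicitly and verifies for each one that the relevant $\sim$-move sends the unique (or each of the three) dessin to itself, so the naive count survives. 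Without this verification your argument is incomplete precisely at the cases the theorem is meant to cover. A minor related point: weak equivalence in the paper allows arbitrary homeomorphisms of $\Sigmatil$, not only orientation-preserving ones; on $S$ this turns out not to change the counts, but you should not build the restriction into the definition.
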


\begin{thm}\label{genus1:thm}
\begin{itemize}
\item $(g=1,\ h=1)$\quad The number of weakly inequivalent realizations of
$$(T,S,2k,3,[2,\ldots,2],[3,3,2,\ldots,2],[2k])$$
is $\frac12 k(k-1)$.
\item $(g=1,\ h=2)$\quad The number of weakly inequivalent realizations of
$$(T,S,2k,3,[2,\ldots,2],[5,3,2,\ldots,2],[p,2k-p])$$
is $0$ for $p=k$,
otherwise it is
$$2\left[\frac14(k-p-1)^2\right]+\left[\frac p2\right]\cdot (k-p-1)+\left[\frac14(p-1)^2\right].$$
\end{itemize}
\end{thm}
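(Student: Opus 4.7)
The plan is to apply the dessin d'enfant strategy of \cite{x1}. A realization of the compatible branch datum
$\left(T, S, 2k, 3, [2,\ldots,2], [2h+1, 3, 2, \ldots, 2], \pi\right)$
corresponds to a cellularly embedded graph $\widehat{\Gamma}$ on the torus $T$ whose vertices have valences equal to the entries of $\pi_2=[2h+1, 3, 2, \ldots, 2]$ (so one vertex of valence $2h+1$, one of valence $3$, and $k-h-2$ of valence $2$) and whose faces have degrees equal to the entries of $\pi$; weak equivalence of covers corresponds to equivalence of the embeddings $\widehat{\Gamma} \subset T$ under homeomorphism of $T$.

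Next, I smooth all $k-h-2$ valence-$2$ vertices of $\widehat{\Gamma}$, turning them into indistinguishable \emph{beads} placed on the edges of a \emph{core} embedded graph $\Gamma_0 \subset T$. In case $g=1$, $h=1$ the core $\Gamma_0$ has two trivalent vertices, three edges, and a single hexagonal face; in case $g=1$, $h=2$ it has one vertex of valence $5$, one of valence $3$, four edges, and two faces of total degree $8$. The first step of the proof is then a finite topological classification: listing, up to homeomorphism of $T$, all cellular embeddings of these two abstract core graphs. Since each core has only $6$ (respectively $8$) darts, one may encode $\Gamma_0$ by a pair of permutations on its darts (edge-pairing and cyclic order around each vertex), and directly enumerate the pairs that produce the torus with the prescribed face structure.

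Once the cores are listed, one counts bead placements and quotients by the appropriate automorphism factors. For $(g=1, h=1)$ there is no face-degree constraint (the single face absorbs all beads), so it suffices to count placements of $k-3$ beads on the three edges of each core modulo its automorphism group, and sum. For $(g=1, h=2)$ an additional face-degree constraint must be respected: each edge of $\Gamma_0$ is either \emph{bi-facial} (adjacent to both faces) or \emph{mono-facial} (adjacent to the same face on both sides); a bead on a bi-facial edge contributes $1$ to the degree of each face, while a bead on a mono-facial edge contributes $2$ to the degree of its single incident face. For each core one then solves the linear system forcing the two face degrees to be $p$ and $2k-p$, counts the solutions modulo core automorphisms, and sums over the cores.

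The main obstacle is the combinatorial enumeration, especially in case $(g=1,\,h=2)$. Because the number of beads on each mono-facial edge must have a specific parity to produce the prescribed face degree $p$, each core's contribution splits naturally into pieces indexed by parity congruences on $p$; the floor brackets in the stated closed-form arise only after summing over all cores and simplifying. The vanishing at $p=k$ reflects a symmetry between the two faces that, in the balanced case $p=2k-p$, would require an identification by an orientation-reversing swap of those faces, which cannot be realized on the relevant cores by a homeomorphism of $T$.
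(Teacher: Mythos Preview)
Your plan coincides with the paper's: reduce to the core $(3,3)$ and $(5,3)$ graphs, enumerate their cellular embeddings in $T$ (the paper finds a single $\theta$-embedding for $h=1$ and four cores $\primo,\secon,\terzo,\quart$ for $h=2$, each carrying the sole nontrivial symmetry $c\leftrightarrow d$), and count positive edge-weight assignments modulo those symmetries. The three floor-bracket terms in the stated formula are exactly the individual contributions of the four cores (two of which yield the same expression), so no cross-core simplification or parity bookkeeping is needed beyond the unordered-pair count forced by $c\leftrightarrow d$.

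One point does need correcting: your explanation of the vanishing at $p=k$ is wrong. The reason is not a missing orientation-reversing face swap. For each of the four cores the two face degrees differ by a strictly positive linear combination of edge weights---the realized partitions are $[a+2b+2c+2d,\,a]$ (for $\primo$ and $\secon$), $[2a+2b+c+d,\,c+d]$ (for $\terzo$), and $[a+2b+c+d,\,a+c+d]$ (for $\quart$)---so forcing the two parts equal would drive some edge weight to $0$, which is forbidden. You will see this immediately once you write out the face-degree equations for each core; no symmetry argument enters.
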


\begin{thm}\label{genus2:thm}
$(g=2,\ h=3)$\quad
The number of weakly inequivalent realizations of
$$(2T,S,2k,3,[2,\ldots,2],[7,3,2,\ldots,2],[2k])$$
is given by
$$\frac1{48}
\left(7k^4 - 70k^3 + 290k^2 - 515k + 288\right) - \frac58(2k - 5)\left[\frac k2\right].$$
\end{thm}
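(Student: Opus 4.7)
Our strategy follows the dessin d'enfant approach used for Theorems \ref{genus0:thm} and \ref{genus1:thm}. A weakly inequivalent realization of the datum corresponds to a bipartite graph $D$ cellularly embedded on $2T$ whose vertices of one colour are all of degree $2$ (there are $k$ of them), whose vertices of the other colour have degree sequence $[7,3,2,\ldots,2]$ (namely one of degree $7$, one of degree $3$, and $k-5$ of degree $2$), and which has exactly one complementary disk, of boundary length $4k$. Weak equivalence translates into equivalence of such $D$'s under homeomorphisms of $2T$.

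First I would smooth out all degree-$2$ vertices of both colours, reducing $D$ to an embedded multigraph $G'$ on $2T$ with just two vertices --- $A$ of degree $7$ and $B$ of degree $3$ --- spanned by $5$ edges. By Euler's formula $2-5+F=-2$ every cellular embedding of such a multigraph on $2T$ has $F=1$, so the enumeration reduces to classifying the cellular embeddings of the two admissible underlying multigraphs: type (A), with $2$ loops at $A$ and $3$ parallel $AB$-edges, and type (B), with $3$ loops at $A$, $1$ loop at $B$, and $1$ $AB$-edge. For each underlying multigraph I would list the possible rotation systems at $A$ and at $B$, verify the single-face condition combinatorially, and group the resulting embeddings into orbits under the graph automorphism group.

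Given an embedded $G'$, a dessin $D$ that smooths to it is determined by an assignment of a positive integer $m_e$ to each edge $e\in E(G')$ with $\sum_e m_e = k$, the value $m_e$ being the number of edges of the once-smoothed dessin lying along $e$. Without symmetries one gets $\binom{k-1}{4}$ such assignments; in general, Burnside's lemma applied to the action of the automorphism group of the embedded $G'$ on $E(G')$ gives the true count. Summing these counts over all embeddings of both multigraph types should produce the announced formula.

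The hard part will be the exhaustive classification of the one-face embeddings on $2T$ of both multigraphs and the determination of their automorphism groups, since both admit substantial combinatorial symmetry and the rotation systems to inspect are numerous. The involutions in these automorphism groups that pair up edges produce Burnside contributions counting compositions of $k$ invariant under an edge-swap, and these are responsible for the $\left[\frac k2\right]$ correction $-\frac58(2k-5)\left[\frac k2\right]$ in the final formula, while the free parts of the action generate the polynomial bulk $\frac1{48}(7k^4-70k^3+290k^2-515k+288)$.
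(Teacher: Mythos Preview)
Your proposal is correct and follows essentially the same route as the paper: reduce to one-face embeddings in $2T$ of the two abstract $(7,3)$ multigraphs, classify these embeddings with their symmetries, and then count edge-weightings $m_e\geqslant 1$ summing to $k$ modulo those symmetries. Two small points of comparison are worth noting. First, the paper disposes of your type~(B) graph (three loops at $A$, one loop at $B$, one $AB$-edge) in one line: the loop at the $3$-valent vertex forces a monogon corner, so no rotation system on it has a single face --- you would discover this during your enumeration, but the shortcut saves work. Second, for the surviving graph the paper finds exactly six one-face embeddings, five carrying the involution $(b\leftrightarrow c,\,d\leftrightarrow e)$ and one rigid, and then computes $\nu=5x+\binom{k-1}{4}$ by an explicit parity-split case analysis of $x$; your Burnside formulation $x=\tfrac12\big(\binom{k-1}{4}+\#\{a+2b+2d=k\}\big)$ is equivalent and arguably cleaner, and indeed the fixed-point term $\#\{a+2b+2d=k\}$ is precisely what produces the $\left[\frac{k}{2}\right]$ correction you anticipate.
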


\section{Weak Hurwitz numbers and dessins d'enfant}\label{DA:sec}
In this section we quickly recall the machinery described in~\cite{x1},
omitting all the (rather easy) proofs. Our techniques are based on the notion of
dessin d'enfant, popularized by Grothendieck in~\cite{Groth} (see also~\cite{Cohen}),
but actually known before his work and
already exploited to give partial answers to the Hurwitz problem (see~\cite{LZ, Bologna}
and the references quoted therein).
Here we explain how to employ the dessins d'enfant to compute weak Hurwitz numbers.
Let us fix until further notice a branch datum
$$\leqno{(\spadesuit)}
\left(\Sigmatil,\Sigma=S,d,n=3,
\pi_1=\left[d_{1i}\right]_{i=1}^{\ell_1},
\pi_2=\left[d_{2i}\right]_{i=1}^{\ell_2},
\pi_3=\left[d_{3i}\right]_{i=1}^{\ell_3}\right).$$
A graph $\Gamma$ is \emph{bipartite} if it has black and white
vertices, and each edge joins black to white. If $\Gamma$ is
embedded in $\Sigmatil$ we call \emph{region} a component $R$ of
$\Sigmatil\setminus\Gamma$, and
\emph{length} of $R$ the number of white (or black) vertices of
$\Gamma$ to which $R$ is incident (with multiplicity).
A pair $(\Gamma,\sigma)$ is called \emph{dessin d'enfant} representing $(\spadesuit)$
if $\sigma\in\permu_3$ and $\Gamma\subset\Sigmatil$ is a bipartite graph
such that:
\begin{itemize}
\item The black vertices of $\Gamma$ have valence $\pi_{\sigma(1)}$;
\item The white vertices of $\Gamma$ have valence $\pi_{\sigma(2)}$;
\item The regions of $\Gamma$ have length $\pi_{\sigma(3)}$.
\end{itemize}
We will also say that $\Gamma$ \emph{represents $(\spadesuit)$ through} $\sigma$.

\begin{rem}
\emph{Let $f:\Sigmatil\to S$ be a branched cover matching $(\spadesuit)$
and take $\sigma\in\permu_3$. If
$\alpha$ is a segment in $S$ with a black and a white end
at the branching points corresponding to $\pi_{\sigma(1)}$ and $\pi_{\sigma(2)}$, then
$\left(f^{-1}(\alpha),\sigma\right)$  represents
$(\spadesuit)$, with vertex colours of $f^{-1}(\alpha)$ lifted via $f$.}
\end{rem}

Reversing the construction described in the previous remark one gets the following:

\begin{prop}\label{from:Gamma:to:f:prop}
To a dessin d'enfant $(\Gamma,\sigma)$ representing $(\spadesuit)$
one can associate a branched cover $f:\Sigmatil\to S$
realizing $(\spadesuit)$, well-defined up to equivalence.
\end{prop}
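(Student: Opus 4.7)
The plan is to reverse the construction hinted at in the remark preceding the proposition. On the target $S$, fix three distinct points $b_1, b_2, b_3$ to serve as the branching points, and fix a simple arc $\alpha \subset S$ joining $b_{\sigma(1)}$ (black end) to $b_{\sigma(2)}$ (white end); the complement $D := S \setminus \alpha$ is then an open disk containing $b_{\sigma(3)}$ in its interior. The map $f: \Sigmatil \to S$ will be built in three layers: first on the $1$-skeleton $\Gamma$, then on small disk neighbourhoods of the vertices, and finally across the regions of $\Sigmatil \setminus \Gamma$.

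First I would send each edge of $\Gamma$ homeomorphically onto $\alpha$, matching the black/white labelling of endpoints. At a black vertex $v$ of valence $m$, the orientation of $\Sigmatil$ supplies a cyclic order on the incident edges, so one can extend the map already defined on a small circle around $v$ to a model $z \mapsto z^m$ from a disk neighbourhood of $v$ onto a disk neighbourhood of $b_{\sigma(1)}$; the hypothesis that black valences realize $\pi_{\sigma(1)}$ makes this work with the correct local degrees, and symmetrically for white vertices and $b_{\sigma(2)}$.

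The third layer is the main step. Counting gives $V_B = \ell_{\sigma(1)}$, $V_W = \ell_{\sigma(2)}$, $F = \ell_{\sigma(3)}$, and $E = d$ (since black valences and white valences both sum to $d$); comparing $V_B - E + V_W + F$ with the Riemann--Hurwitz value of $\chi(\Sigmatil)$ forces every region $R$ to be an open disk, consistently with the fact that the very definition of length with multiplicity presupposes this. Traversing $\partial R$ once, the already-defined map wraps $\partial R$ around $\partial D$ exactly $\mathrm{length}(R)$ times (one wrap per black-white-black cycle), so one can extend over $R$ by a degree-$\mathrm{length}(R)$ branched cover of $D$ ramified only over $b_{\sigma(3)}$, via the model $z \mapsto z^{\mathrm{length}(R)}$. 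Because the multiset of region lengths equals $\pi_{\sigma(3)}$, the resulting local degrees over $b_{\sigma(3)}$ are as prescribed, and $f$ realizes $(\spadesuit)$.

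The hard part is the well-definedness up to equivalence. The construction depends on several choices: the arc $\alpha$, the edge-by-edge homeomorphisms onto $\alpha$, the vertex models, and the region extensions. Different choices of $\alpha$ differ by a self-homeomorphism of $S$ (permuting or fixing the $b_j$'s as appropriate), while different edge and vertex identifications differ by a self-homeomorphism of $\Sigmatil$ preserving $\Gamma$ setwise and each vertex colour. The nontrivial point is that any two extensions over a single region $R$ are conjugate by a self-homeomorphism of $R$ fixing $\partial R$, because two orientation-preserving branched covers of a disk onto a disk with the same degree and a single interior branch point are unique up to such a homeomorphism. Piecing these local homeomorphisms together yields $\gtil$ and $g$ with $f_1 \compo \gtil = g \compo f_2$, so the weak equivalence class of $f$ depends only on $(\Gamma, \sigma)$.
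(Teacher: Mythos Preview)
Your argument is correct and is precisely the construction the paper intends: the paper itself omits the proof here, stating that these results are recalled from~\cite{x1} with ``rather easy'' proofs omitted, and only indicates that one should reverse the construction of the preceding remark. Your three-layer build (edges onto the arc $\alpha$, vertex neighbourhoods via $z\mapsto z^m$, then regions via degree-$\mathrm{length}(R)$ disk covers branched over $b_{\sigma(3)}$) is exactly that reversal, and your Euler-characteristic check that all regions are forced to be disks, together with the uniqueness-up-to-boundary-fixing-homeomorphism of a disk branched cover of given degree, correctly handles the well-definedness claim.
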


We define the equivalence relation $\sim$ on dessins d'enfant generated by:
\begin{itemize}
\item $(\Gamma_1,\sigma_1)\sim(\Gamma_2,\sigma_2)$ if $\sigma_1=\sigma_2$ and
there is an automorphism $\gtil:\Sigmatil\to\Sigmatil$ such that
$\Gamma_1=\gtil\left(\Gamma_2\right)$ matching colours;
\item $(\Gamma_1,\sigma_1)\sim(\Gamma_2,\sigma_2)$ if $\sigma_1=\sigma_2\compo(1\,2)$ and
$\Gamma_1=\Gamma_2$ as a set but with vertex colours switched;
\item $(\Gamma_1,\sigma_1)\sim(\Gamma_2,\sigma_2)$ if $\sigma_1=\sigma_2\compo(2\,3)$ and
$\Gamma_1$ has the same black vertices as $\Gamma_2$ and
for each region $R$ of $\Gamma_2$ we have that $R\cap\Gamma_1$ consists
of one white vertex and disjoint edges joining this vertex with the black vertices
on the boundary of $R$.
\end{itemize}

\begin{thm}\label{equiv:Gamma:for:equiv:f:thm}
The branched covers associated as in Proposition~\ref{from:Gamma:to:f:prop}
to two dessins d'enfant are equivalent if and only if
the dessins are related by $\sim$.
\end{thm}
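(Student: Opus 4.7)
The plan is to prove the two directions separately, using Proposition~\ref{from:Gamma:to:f:prop} and the Remark preceding it to pass back and forth between covers and dessins.

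For sufficiency, I would check that each of the three generators of $\sim$ produces weakly equivalent covers. For the first generator (same $\sigma$, with a colour-preserving automorphism $\gtil$ of $\Sigmatil$ carrying $\Gamma_2$ onto $\Gamma_1$), the construction of Proposition~\ref{from:Gamma:to:f:prop} is canonical with respect to self-homeomorphisms of $\Sigmatil$, so the associated covers satisfy $f_1\compo\gtil=f_2$ with $g=\textrm{id}_S$, yielding (even strong) equivalence. For the second generator ($\sigma_1=\sigma_2\compo(1\,2)$, colours switched), one takes a self-homeomorphism $g$ of $S$ interchanging the two branching points labelled by $\pi_{\sigma(1)}$ and $\pi_{\sigma(2)}$ and reversing the base segment $\alpha$; this $g$ lifts to a $\gtil$ realizing the weak equivalence. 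For the third generator ($\sigma_1=\sigma_2\compo(2\,3)$), the combinatorial prescription for $\Gamma_1$ inside the regions of $\Gamma_2$ is precisely what the Remark produces from the same cover $f$ when the base segment is slid so its white endpoint leaves the branching point of $\pi_{\sigma(2)}$ and moves into that of $\pi_{\sigma(3)}$, crossing each region of $\Gamma_2$ once.

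For necessity, assume that $f_1\compo\gtil=g\compo f_2$ realizes a weak equivalence, and build the dessins $(\Gamma_j,\sigma_j)=(f_j^{-1}(\alpha_j),\sigma_j)$ for suitable segments $\alpha_j\subset S$. Applying $g$ to $\alpha_2$ and pulling back along $\gtil^{-1}$, we may reduce to the case $f_1=f_2$; the content of necessity then becomes the statement that any two dessins obtained from the \emph{same} cover $f$ are $\sim$-related. Given two choices $(\alpha,\sigma)$ and $(\alpha',\sigma')$: once the two endpoints of $\alpha$ and their colours are fixed, any isotopy of $S$ relative to the branching points carrying $\alpha$ to $\alpha'$ lifts to an isotopy of $\Sigmatil$ carrying $f^{-1}(\alpha)$ to $f^{-1}(\alpha')$, giving a move of type~1. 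Change of the pair of branching points joined by $\alpha$ and of their colours corresponds to postcomposing $\sigma$ by an element of $\permu_3$; since $(1\,2)$ and $(2\,3)$ generate $\permu_3$, it suffices to realize these two swaps, and they correspond exactly to moves of type~2 and~3, respectively.

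The main obstacle is the verification of move~3: one must show that when $\alpha'$ is obtained from $\alpha$ by displacing the white endpoint from the branching point of $\pi_{\sigma(2)}$ into that of $\pi_{\sigma(3)}$, the pre-image $f^{-1}(\alpha')$ contains exactly one white vertex in each region of $f^{-1}(\alpha)$, joined by disjoint edges to the black vertices on the boundary of that region, with valences matching $\pi_{\sigma(3)}$. This requires a combination of the local model $z\mapsto z^m$ near each branching point with a global identification of regions of $f^{-1}(\alpha)$ with their images in $S$, and is the one place in the argument where the cellular structure of the dessin has to be analyzed in detail.
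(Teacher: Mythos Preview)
The paper does not actually prove this theorem: at the start of Section~\ref{DA:sec} the author states explicitly that the section ``quickly recall[s] the machinery described in~\cite{x1}, omitting all the (rather easy) proofs.'' So there is no proof in the present paper to compare your proposal against; the argument lives in the companion paper~\cite{x1}.

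That said, your outline is the natural one and matches what the parenthetical ``rather easy'' suggests: check that each generator of $\sim$ preserves the weak equivalence class of the associated cover, and conversely reduce to a single cover $f$ and show that any two dessins arising from $f$ via different choices of $(\alpha,\sigma)$ are connected by the three moves. One point deserves a bit more care than you indicate. In the necessity direction you write that ``any isotopy of $S$ relative to the branching points carrying $\alpha$ to $\alpha'$'' handles the case of fixed $\sigma$, but two arcs in $S$ with the same endpoints and avoiding the third branching point need not be isotopic rel endpoints in the complement of that third point; they differ by braiding around it. You therefore need either to allow isotopies that sweep $\alpha$ across the third branching point (and check that such a sweep is realized by a composition of your moves, essentially a type-3 move followed by its inverse with a different choice of star in the crossed region), or to invoke the lifting of a self-homeomorphism of the three-punctured sphere rather than merely an isotopy. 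This is not a fatal gap, but it is the one genuinely nontrivial step hiding behind the phrase ``rather easy.''
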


When the partitions $\pi_1,\pi_2,\pi_3$ in the branch datum $(\spadesuit)$
are pairwise distinct, to compute the corresponding weak Hurwitz number one can
stick to dessins d'enfant representing the datum through the identity, namely
one can list up to automorphisms of $\Sigmatil$ the bipartite graphs
with black and white vertices of valence $\pi_1$ and $\pi_2$ and
regions of length $\pi_3$. When the partitions are not distinct, however,
it is essential to take into account the other moves generating $\sim$.
In any case we will henceforth omit any reference to the permutations in $\permu_3$.

\paragraph{Relevant data and repeated partitions}
We now specialize again to a branch datum of the form $(\heartsuit)$.
We will compute its weak Hurwitz number $\nu$ by enumerating up to
automorphisms of $\Sigmatil$ the dessins d'enfant $\Gamma$
representing it through the identity, namely
the bipartite graphs $\Gamma$ with black vertices of
valence $[2,\ldots,2]$, the white vertices of valence
$[2h+1,3,2,\ldots,2]$, and the regions of
length $\pi$. Two remarks are in order:
\begin{itemize}
\item In all the pictures we will only draw the two white
vertices of $\Gamma$ of valence $(2h+1,3)$, and
we will decorate an edge of $\Gamma$ by an integer $a\geqslant1$
to understand that the edge contains $a$ black and $a-1$ white valence-2 vertices;
\item Enumerating these $\Gamma$'s up to automorphisms of $\Sigmatil$
already gives the right value of $\nu$ except if two of the
partitions of $d$ in coincide.
\end{itemize}

\begin{prop}
In a branch datum of the form $(\heartsuit)$ two of the partitions of $d$ coincide precisely in
the following cases:
\begin{itemize}
\item $g=0,\ h\geqslant 0,\ k=h+2$, with partions $[2,\ldots,2],[2k-3,3],[2,\ldots,2]$;
\item Any $g,\ h\geqslant 2g,\ k=2h+2-2g$, with partitions $[2,\ldots,2],[2h+1,3,2,\ldots,2],[2h+1,3,2,\ldots,2]$.
\end{itemize}
\end{prop}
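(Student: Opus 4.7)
The strategy is to compare the three partitions $\pi_1,\pi_2,\pi_3$ pairwise, after first recording their lengths. Since $\pi_1$ consists of $k$ copies of $2$ summing to $2k$, one has $\ell(\pi_1)=k$. For $\pi_2=[2h+1,3,2,\ldots,2]$, solving $(2h+1)+3+2m=2k$ gives $m=k-h-2$ twos, so $\ell(\pi_2)=k-h$; non-negativity of $m$ encodes the basic well-definedness constraint $h\leqslant k-2$. The length $\ell$ of $\pi_3$ is $h-2g+2$, as stated in the introduction.

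The pair $\pi_1=\pi_2$ is ruled out at once, since $\pi_2$ contains the entry $3$, which cannot appear in the all-$2$'s partition $\pi_1$. For $\pi_1=\pi_3$, I need $\pi=[2,\ldots,2]$, which forces $\ell=k$; combined with $\ell=h-2g+2$ this yields $h=k+2g-2$, and substituting into $h\leqslant k-2$ gives $2g\leqslant 0$, hence $g=0$ and $k=h+2$. In this situation $\pi_2$ has zero twos and reduces to $[2k-3,3]$, matching the first listed case.

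For $\pi_2=\pi_3$, matching the partitions directly forces the lengths to agree, so $k-h=h-2g+2$, i.e.\ $k=2h-2g+2$. The well-definedness constraint $h\leqslant k-2$ then translates into $h\geqslant 2g$, which is compatible with the standing hypothesis $h\geqslant 2g-1$ and reproduces the second listed case. The proof is essentially arithmetic bookkeeping with partitions and lengths; the one mildly subtle point I anticipate is the need to invoke the correct inequality $h\leqslant k-2$ (rather than a weaker one) to exclude positive genus in the first case, and this is what I expect to be the main pitfall.
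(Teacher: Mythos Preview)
Your proof is correct and follows essentially the same approach as the paper: compute the lengths $\ell_1=k$, $\ell_2=k-h$, $\ell=h-2g+2$, rule out $\pi_1=\pi_2$ immediately, and use the well-definedness constraint $k\geqslant h+2$ to pin down the remaining two pairwise coincidences. Your argument is slightly more explicit than the paper's (e.g.\ you spell out why $\pi_1\neq\pi_2$ via the entry $3$, and you derive $h\leqslant k-2$ rather than quoting it), but the logic is identical.
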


\begin{proof}
The lengths of the partitions $\pi_1,\pi_2,\pi$ in $(\heartsuit)$ are $\ell_1=k$, $\ell_2=k-h$ and
$\ell=h+2-2g$. We can never have $\pi_1=\pi_2$. Since $k\geqslant h+2$ we can have $\ell_1=\ell$ only
if $g=0$ and $k=h+2$, whence the first listed item. We can have $\ell_2=\ell$ only for $k=2h+2-2g$,
whence $h\geqslant 2g$ and the data in the second listed item.
\end{proof}

This result implies that the data $(\heartsuit)$ relevant to Theorems~\ref{genus0:thm} to~\ref{genus2:thm} and
containing repetitions  are precisely
$$(S,S,4,3,[2,2],[1,3],[2,2])$$
$$(S,S,6,3,[2,2,2],[3,3],[2,2,2])$$
$$(S,S,8,3,[2,2,2,2],[5,3],[2,2,2,2])$$
$$(S,S,4,3,[2,2],[1,3],[1,3])$$
$$(S,S,8,3,[2,2,2,2],[3,3,2],[3,3,2])$$
$$(S,S,12,3,[2,2,2,2,2,2],[5,3,2,2],[5,3,2,2])$$
$$(T,S,8,3,[2,2,2,2],[5,3],[5,3]).$$
Moreover we have $\nu=0$
in the first and third cases by the \emph{very even data} criterion of~\cite{PaPebis},
while the other cases will be taken into account below.

\section{Genus 0}\label{genus0:sec}
In this section we prove Theorem~\ref{genus0:thm}, starting
from the very easy case $h=0$, for which there is only
one homeomorphism type of relevant graph and only one embedding
in $S$, as shown on the left in Fig.~\ref{3133inS:fig} ---here and below
$(2h+1,3)$ \emph{graph} abbreviates \emph{graph with vertices
of valence} $(2h+1,3)$.
\begin{figure}
    \begin{center}
    \includegraphics[scale=.6]{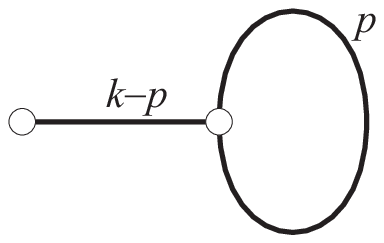}
    \hspace{1.5cm}
    \includegraphics[scale=.6]{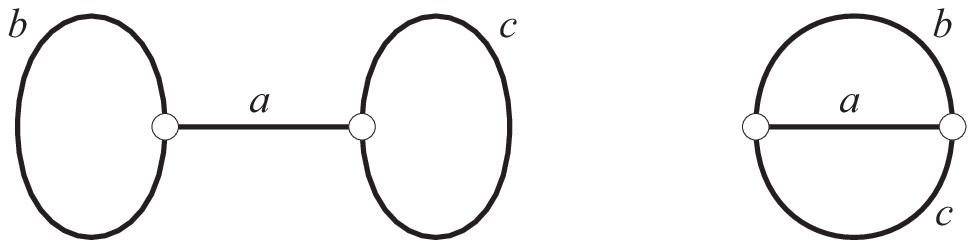}
    \end{center}
\mycap{The only $(1,3)$ graph and the two $(3,3)$ graphs in $S$. \label{3133inS:fig}}
\end{figure}
This graph gives a unique realization of $\pi=[p,2k-p]$ for $p<k$,
while $[k,k]$ is exceptional. Note that a single graph emerges for the
realization of the case with repeated partitions
$(S,S,4,3,[2,2],[1,3],[1,3])$, so its realization is \emph{a fortiori} unique
up to equivalence (and it is also immediate to check that the last move
generating $\sim$ leads this graph to itself).

\bigskip

Turning to the case $h=1$ we note that there are two $(3,3)$ graphs,
both with a unique embedding in $S$, shown  in Fig.~\ref{3133inS:fig}-center/right
and denoted by $\primo(a,b,c)$ and $\secon(a,b,c)$.
Remark that $\primo(a,b,c)$ has a $b\leftrightarrow c$ symmetry, while $\secon(a,b,c)$ is fully symmetric in $a,b,c$.
Moreover $\primo(a,b,c)$ realizes $\pi=[2a+b+c,b,c]$ while $\secon(a,b,c)$ realizes $[a+b,a+c,b+c]$.

Now we observe that the partition $\pi$ satisfies one and only one of the following:
\begin{itemize}
  \item[(i)] $\pi$ contains $k$;
  \item[(ii)] $\pi=[2k-2q,q,q]$ with (a) $1\leqslant q<\frac k2$ or (b) $\frac k2<q<k$;
  \item[(iii)] $\pi=[2k-q-r,q,r]$ with
\begin{itemize}
    \item[(a)] $1\leqslant r<\frac k2$ and $r<q<k-r$, or
    \item[(b)] $1\leqslant r<\frac k2$ and $k-r<q<k-\frac r2$, or
    \item[(c)] $\frac k2\leqslant r<\frac23k$ and $r<q<k-\frac r2$
\end{itemize}
\end{itemize}
(the conditions $2k-q-r>q>r$ readily imply that $r<\frac23k$ and $q<k-\frac r2$).
Since we always have $a+b+c=k$, it is immediate that neither $\primo(a,b,c)$ nor $\secon(a,b,c)$ can
realize (i). We now claim that for all the other listed cases there always is a single realization.
For $\pi=[2k-2q,q,q]$ there is a realization as $\primo(a,b,c)$ precisely if
$$\sistema{2a+b+c=2k-2q\\ b=c=q}\quad\Leftrightarrow\quad
\sistema{a=k-2q\\ b=c=q}\qquad \textrm{for}\ 1\leqslant q<\frac k2$$
so we have case (ii-a), while there is a realization as $\secon(a,b,c)$ if
$$\sistema{a+b=2k-2q\\ a+c=q\\ b+c=q}\quad\Leftrightarrow\quad
\sistema{a=k-q\\ b=k-q\\ c=2k-q}\qquad \textrm{for}\ \frac k2<q<k$$
whence case (ii-b). Turning to $[2k-q-r,q,r]$ with $2k-q-r>q>r$ there is a realization as $\primo(a,b,c)$ for
$$\sistema{2a+b+c=2k-q-r\\ b=q\\ c=r}\quad\Leftrightarrow\quad
\sistema{a=k-q-r\\ b=q\\ c=r}\quad
\begin{array}{l}
\textrm{for}\ r<q<k-r\\
\textrm{whence}\ r<\frac k2
\end{array}$$
and case (iii-a), while there is one as $\secon(a,b,c)$ if
$$\sistema{a+b=2k-q-r\\ a+c=q\\ b+c=r}\quad\Leftrightarrow\quad
\sistema{a=k-r\\ b=k-q\\ c=q+r-k}\quad
\begin{array}{l}
\textrm{for}\\
\max\{r,k-r\}<q<k
\end{array}$$
and depending on whether $\max\{r,k-r\}$ is $k-r$ or not we get
(iii-b) and (iii-c).
To conclude the case $h=1$, we must deal with the data
with repeated partitions, namely
$$(S,S,6,3,[2,2,2],[3,3],[2,2,2])$$
$$(S,S,8,3,[2,2,2,2],[3,3,2],[3,3,2]).$$
We begin with the former, noting that only one dessin d'enfant
$\Gamma$ arises from the previous argument for its realization,
so we must have $\nu=1$, as in the statement.  As a confirmation,
we check that of the $6$ potentially different graphs equivalent
to $\Gamma$ under $\sim$, for the other one $\Gamma'$
with black and white vertices of valence $[2,2,2]$ and $[3,3]$ respectively,
we actually have $\Gamma'=\gtil(\Gamma)$ for some $\gtil:S\to S$, as
apparent from Fig.~\ref{selfthetas:fig}-left.
\begin{figure}
    \begin{center}
    \includegraphics[scale=.6]{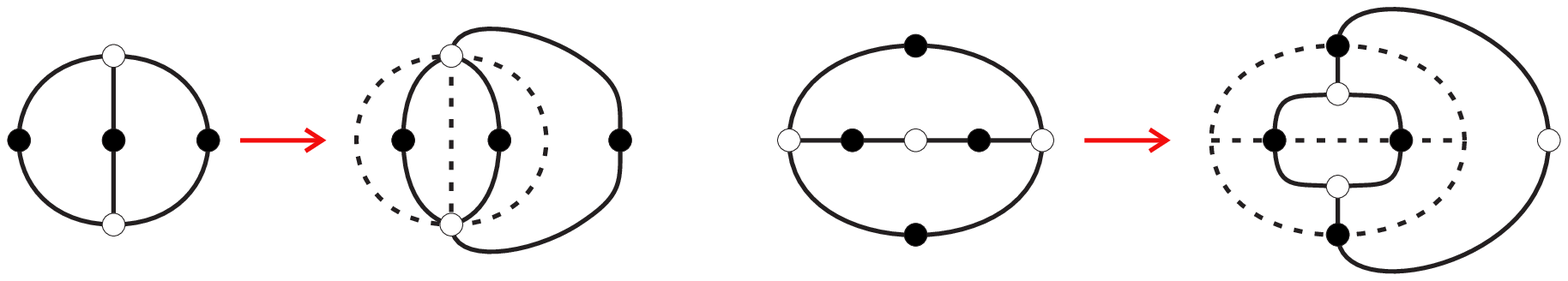}
    \end{center}
\mycap{Graphs equivalent under $\sim$. \label{selfthetas:fig}}
\end{figure}
For the latter datum with repetitions, the conclusion is analogous:
only one graph $\Gamma$ arises, so $\nu=1$, and if $\Gamma'$ is the
graph equivalent to $\Gamma$ under $\sim$ with black and white vertices
of valence $[2,2,2,2]$ and $[3,3,2]$ respectively, we have
have $\Gamma'=\gtil(\Gamma)$, as shown in Fig.~\ref{selfthetas:fig}-right.

\medskip

As an example, we provide in Table~\ref{Tk8:tab}
an application of Theorem~\ref{genus0:thm} for the case $h=1$ and $k=8$,
showing that all the listed cases actually occur.
\begin{table}
\begin{center}
\begin{tabular}{c||l|c|l}
$\pi$       &   Case
                &   $\nu$   &   Realizations \\ \hline\hline
(14,1,1)    & (ii-a) \tiny{$1\leqslant q=1<\frac k2=4$}
                & 1 & \small{\primo(6,1,1)}  \\ \hline
(13,2,1)    & (iii-a) \tiny{$\begin{array}{l}1\leqslant r=1<\frac k2=4\\ r=1<q=2<k-r=7\end{array}$}
                & 1 & \small{\primo(5,2,1)}  \\ \hline
(12,3,1)    & (iii-a) \tiny{$\begin{array}{l}1\leqslant r=1<\frac k2=4\\ r=1<q=3<k-r=7\end{array}$}
                & 1 & \small{\primo(4,3,1)}  \\ \hline
(12,2,2)    & (ii-a) \tiny{$1\leqslant q=2<\frac k2=4$}
                & 1 & \small{\primo(4,2,2)}  \\ \hline
(11,4,1)    & (iii-a) \tiny{$\begin{array}{l}1\leqslant r=1<\frac k2=4\\ r=1<q=4<k-r=7\end{array}$}
                & 1 & \small{\primo(3,4,1)}  \\ \hline
(11,3,2)    & (iii-a) \tiny{$\begin{array}{l}1\leqslant r=2<\frac k2=4\\ r=2<q=3<k-r=6\end{array}$}
                & 1 & \small{\primo(3,3,2)}  \\ \hline
(10,5,1)    & (iii-a) \tiny{$\begin{array}{l}1\leqslant r=1<\frac k2=4\\ r=1<q=5<k-r=7\end{array}$}
                & 1 & \small{\primo(2,5,1)}  \\ \hline
(10,4,2)    & (iii-a) \tiny{$\begin{array}{l}1\leqslant r=2<\frac k2=4\\ r=2<q=4<k-r=6\end{array}$}
                & 1 & \small{\primo(2,4,2)}  \\ \hline
(10,3,3)    & (ii-a) \tiny{$1\leqslant q=3<\frac k2=4$}
                & 1 & \small{\primo(2,3,3)}  \\ \hline
(9,6,1)     & (iii-a) \tiny{$\begin{array}{l}1\leqslant r=1<\frac k2=4\\ r=1<q=6<k-r=7\end{array}$}
                & 1 & \small{\primo(1,6,1)}  \\ \hline
(9,5,2)     & (iii-a) \tiny{$\begin{array}{l}1\leqslant r=2<\frac k2=4\\ r=2<q=5<k-r=6\end{array}$}
                & 1 & \small{\primo(1,5,2)}  \\ \hline
(9,4,3)     & (iii-a) \tiny{$\begin{array}{l}1\leqslant r=3<\frac k2=4\\ r=3<q=4<k-r=5\end{array}$}
                & 1 & \small{\primo(1,4,3)}  \\ \hline
(8,7,1)     & (i)
                & 0 &   \\ \hline
(8,6,2)     & (i)
                & 0 &   \\ \hline
(8,5,3)     & (i)
                & 0 &   \\ \hline
(8,4,4)     & (i)
                & 0 &   \\ \hline
(7,7,2)     & (ii-b) \tiny{$\frac k2=4<q=7<k=8$}
                & 1 & \small{\secon(6,1,1)}  \\ \hline
(7,6,3)     & (iii-b) \tiny{$\begin{array}{l}1\leqslant r=3<\frac k2=4\\ k-r=5<q=6<k-\frac r2=6.5\end{array}$}
                & 1 & \small{\secon(5,2,1)}  \\ \hline
(7,5,4)     & (iii-c) \tiny{$\begin{array}{l}\frac k2=4\leqslant r=4<\frac23k=5.\overline{3}\\ r=4<q=5<k-\frac r2=6\end{array}$}
                & 1 & \small{\secon(4,3,1)}  \\ \hline
(6,6,4)     & (ii-b) \tiny{$\frac k2=4<q=6<k=8$}
                & 1 & \small{\secon(4,2,2)}  \\ \hline
(6,5,5)     & (ii-b) \tiny{$\frac k2=4<q=5<k=8$}
                & 1 & \small{\secon(3,3,2)}
\end{tabular}
\end{center}
\mycap{The genus-$0$ case with $h=1$ and $k=8$. \label{Tk8:tab}}
\end{table}

\bigskip

Now we concentrate on the case $h=2$, that requires considerable work.
We first note that there are two abstract $(5,3)$ graphs, one of which
has two inequivalent embeddings in $S$, as shown in Fig.~\ref{53inS:fig}.
\begin{figure}
    \begin{center}
    \includegraphics[scale=.6]{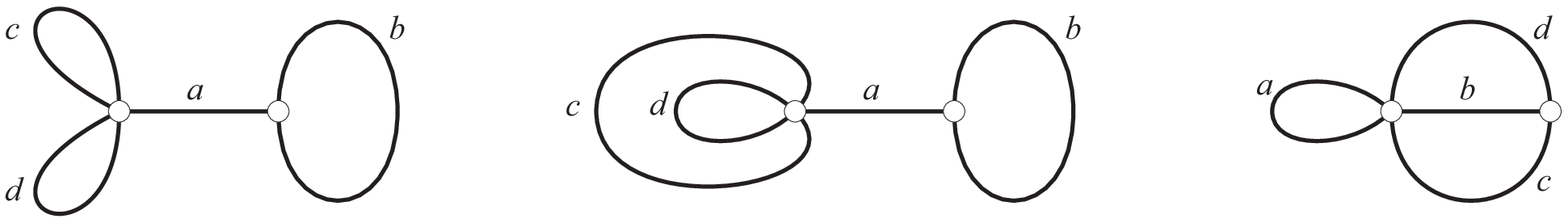}
    \end{center}
\mycap{The $(5,3)$ graphs in $S$. \label{53inS:fig}}
\end{figure}
Denoting these graphs by $\primo(a,b,c,d),\ \secon(a,b,c,d),\ \terzo(a,b,c,d)$, we note that
they realize the partitions $[2a+b+c+d,b,c,d]$, $[2a+b+c,c+d,b,d]$, $[a+c+d,b+c,b+d,a]$ respectively,
and that their only symmetries are $\primo(a,b,d,c)=\primo(a,b,c,d)$ and $\terzo(a,b,d,c)=\terzo(a,b,c,d)$.

The form of the partitions realized by $\primo,\secon,\terzo$ readily
shows that $\pi=[p,p,p,p]$ cannot be realized. Moreover, $\pi=[p,p,q,q]$
for $p>q$ of course cannot via $\primo$, and it also cannot via
$\secon$ or $\terzo$, since it only could as
$$\sistema{2a+b+c=p\\ c+d=p\\ b=k-p\\ d=k-p}
\quad\Rightarrow a=0,\qquad\qquad
\sistema{a+c+d=p\\ b+c=p\\ b+d=k-p\\ a=k-p}
\quad\Rightarrow d=0.$$

\medskip

The partition $\pi=[p,p,p,q]$ for $p>q$, so $q=2k-3p$ and $\frac k2<p<\frac23k$,
cannot be realized via $\primo$ or $\secon$, while it can
via $\terzo$ only as
$$\sistema{a+c+d=p\\ b+c=p\\ b+d=p\\ a=2k-3p}
\quad\Leftrightarrow\quad\sistema{a=2k-3p\\ b=k-p\\ c=2p-k\\ d=2p-k}$$
which gives positive $a,b,c,d$, so there is a unique realization.
Note that $\pi$ cannot contain $k$, so our finding is in agreement with the statement.

\medskip

Turning to $\pi=[p,q,q,q]$ for $p>q$, so $p=2k-3q$ and $0<q<\frac k2$, we get
realizations via $\primo$ as
$$\sistema{2a+b+c+d=2k-3q\\ b=c=d=q}
\quad\Leftrightarrow\quad\sistema{a=k-3q\\ b=c=d=q}\qquad\textrm{for}\ q<\frac k3,$$
while there is none via $\secon$, and there is one via $\terzo$ as
$$\sistema{a+c+d=2k-3q\\ b+c=b+d=a=q}
\quad\Leftrightarrow\quad\sistema{a=q\\ b=3q-k\\ c=k-2q\\ d=k-2q}\qquad\textrm{for}\ q>\frac k3$$
so there is a unique realization except for $q=\frac k3$, namely $p=k$, as in the statement.

\medskip

Before proceeding we prove two facts that we will use a few times.
Take $\pi=[p,q,r,s]$ with $p\geqslant q\geqslant r\geqslant s$.
Then:
\begin{itemize}
\item If $\pi$ contains $k$ then $p=k$;
\item If $k$ is the sum of two entries of $\pi$ then $p+s=q+r=k$.
\end{itemize}
The first assertion is obvious. For the second one, note that
if $p+r=q+s=k$ then $p=q$ and $r=s$, so we also have $p+s=q+r=k$,
while if $p+q=r+s=k$ then $p=q=r=s$, and again $p+s=q+r=k$.

\medskip

We now study the partitions $\pi=[p,p,q,r]$ with $p>q>r$.
Note that $r=2k-2p-q$, so $q<2k-2p$, and $q>2k-2p-q$, so $q>k-p$,
and finally $2p>q+r$, so $p>\frac k2$.
No realization via $\primo$ is possible, while one via $\secon$ would be only in
one of the following ways:
$$\sistema{2a+b+c=p\\ c+d=p\\ b=q\\ d=2k-2p-q}
\quad\Rightarrow\quad a=k-p-q\quad\Rightarrow\quad q<k-p\ \textrm{(impossible)},$$
$$\sistema{2a+b+c=p\\ c+d=p\\ b=2k-2p-q\\ d=q}
\quad\Leftrightarrow\quad \sistema{a=p+q-k\\ b=2k-2p-q\\ c=p-q\\ d=q.}$$
The first way gives nothing, and the second one gives a unique solution without conditions.
Similarly, via $\terzo$ we could only have
$$\sistema{a+c+d=p\\ b+c=p\\ b+d=q\\ a=2k-2p-q}
\quad\Leftrightarrow\quad\sistema{a=2k-2p-q\\ b=k-p\\ c=2p-k\\ d=p+q-k,}$$
$$\sistema{a+c+d=p\\ b+c=p\\ a=q\\ b+d=2k-2p-q}
\quad\Rightarrow\quad d=k-p-q \quad\Rightarrow\quad q<k-p\ \textrm{(impossible)},$$
$$\sistema{b+c=p\\ b+d=p\\ a+c+d=q\\ a=2k-2p-q}
\quad\Leftrightarrow\quad\sistema{a=2k-2p-q\\ b=k-q\\ c=p+q-k\\ d=p+q-k}$$
where the second way gives nothing and the other two always give acceptable
solutions, so we have $2$ realizations via $\terzo$. The total number of
realizations of $(p,p,q,r)$ is then always $3$. To show that this is
in agreement with the statement, we note that we cannot have $p=k$, and we also
cannot have $p+r=p+q=k$, otherwise $r=q$.

\medskip

Turning to $(p,q,q,r)$ for $p>q>r$, we first note that
$r=2k-p-2q$, so $q<k-\frac p2$, and $q>2k-p-2q$, so $q>\frac13(2k-p)$.
Moreover $p>\frac k2$, whence $3k-3p<2k-p$ and $k-p<\frac13(2k-p)$,
therefore $q>k-p$, which we will need below.
The realizations via $\primo$ come from
$$\sistema{2a+b+c+d=p\\ c=q\\ d=q\\ b=2k-p-2q}
\quad\Leftrightarrow\quad\sistema{a=p-k\\ b=2k-p-2q\\ c=q\\ d=q}\qquad\textrm{for}\ p>k,$$
$$\sistema{2a+b+c+d=p\\ c=q\\ d=2k-p-2q\\ b=q}
\quad\Leftrightarrow\quad\sistema{a=p-k\\ b=q\\ c=q\\ d=2k-p-2q}\qquad\textrm{for}\ p>k$$
whence two solutions for $p>k$ and none otherwise. Via $\secon$ we can have
$$\sistema{2a+b+c=p\\ b=q\\ c+d=q\\ d=2k-p-2q}
\quad\Leftrightarrow\quad\sistema{a=k-2q\\ b=q\\ c=p+3q-2k\\ d=2k-p-2q}
\qquad \textrm{for}\ q<\frac k2,$$
$$\sistema{c+d=p\\ 2a+b+c=q\\ d=q\\ b=2k-p-2q}
\quad\Leftrightarrow\quad\sistema{a=2q-k\\ b=2k-p-2q\\ c=p-q\\ d=q}
\qquad\begin{array}{l}
\textrm{for}\ q>\frac k2\\ (\textrm{so}\ p<k),
\end{array}$$
therefore, no solution for $q=\frac k2$ and one otherwise.
Finally, from $\terzo$ we get
$$\sistema{a+c+d=p\\ b+c=q\\ b+d=q\\ a=2k-p-2q}
\quad\Leftrightarrow\quad\sistema{a=2k-p-2q\\ b=k-p\\ c=p+q-k\\ d=p+q-k}\qquad\textrm{for}\ p<k$$
$$\sistema{a+c+d=p\\ b+c=q\\ b+d=2k-p-2q\\ a=q}
\quad\Leftrightarrow\quad\sistema{a=q\\ b=k-p\\ c=p+q-k\\ d=k-2q}\qquad\textrm{for}\ p<k\ \textrm{and}\ q<\frac k2$$
$$\sistema{a+c+d=q\\ b+c=p\\ b+d=q\\ a=2k-p-2q}
\quad\Leftrightarrow\quad\sistema{a=2k-p-2q\\ b=k-q\\ c=p+q-k\\ d=2q-k}\qquad
\begin{array}{l}
\textrm{for}\ q>\frac k2\\ (\textrm{so}\ p<k).
\end{array}$$
Summing up, for $(p,q,q,r)$ we have $1$ realization if $p=k$ or $q=\frac k2$, and $3$
otherwise, in accordance with the statement.

\medskip

In the last case with repetitions, namely $\pi=[p,q,r,r]$ with $p>q>r$, we note that $p-q$ is even,
and we denote it by $2j$, so $q=p-2j$ and $r=k-p+j$, with $p-2j>k-p+j$, so
$p-k<j<\frac13(2p-k)$. We also note that $p>\frac k2$, which readily implies that
$\frac13(2p-k)<p-\frac k2$, therefore $j<p-\frac k2$, which we will need soon.
The realizations of $\pi$ via $\primo$ come as
$$\sistema{2a+b+c+d=p\\ b=p-2j\\ c=d=k-p+j}
\quad\Leftrightarrow\quad\sistema{a=p-k\\ b=p-2j\\ c=d=k-p+j}\qquad\textrm{for}\ p>k,$$
$$\sistema{2a+b+c+d=p\\ b=d=k-p+j\\ c=p-2j}
\quad\Leftrightarrow\quad\sistema{a=p-k\\ b=d=k-p+j\\ c=p-2j}\qquad\textrm{for}\ p>k$$
so there are $2$ of them for $p>k$ and none otherwise. Now $\secon$ gives
$$\sistema{2a+b+c=p\\ c+d=p-2j\\ b=d=k-p+j}
\quad\Leftrightarrow\quad\sistema{a=j\\ b=d=k-p+j\\ c=2p-k-3j,}$$
$$\sistema{2a+b+c=p-2j\\ c+d=p\\ b=d=k-p+j}
\quad\Rightarrow\quad a=-j\ \textrm{(impossible)}$$
whence always a unique realization. From $\terzo$ we get instead
$$\sistema{a+c+d=p\\ b+c=p-2j\\ b+d=a=k-p+j}
\quad\Leftrightarrow\quad\sistema{a=k-p+j\\ b=k-p\\ c=2p-2j-k\\ d=j}\qquad\textrm{for}\ p<k,$$
$$\sistema{a+c+d=p\\ a=p-2j\\ b+c=b+d=k-p+j}
\quad\Leftrightarrow\quad\sistema{a=p-2j\\ b=k-p\\ c=d=j}\qquad\textrm{for}\ p<k,$$
$$\sistema{a+c+d=p-2j\\ a=b+d=k-p+j\\ b+c=p}
\quad\Rightarrow\quad d=-j$$
whence $2$ realizations for $p<k$ and none otherwise.
Summarizing the case $\pi=[p,q,r,r]$, we have one realization
for $p=k$ and $3$ otherwise, which agrees with the statement, since
for such a $\pi$ the conditions $p+r=q+r=k$ implies $p=q$.

\medskip

We are only left to deal with the general case
$\pi=[p,q,r,s]$ for $p>q>r>s$, so $s=2k-p-q-r$
with $0<2k-p-q-r<r$, so $k-\frac12(p+q)<r<2k-p-q$.
Note that we also have $p+q>p+r>k$ and
$r+s<q+s<k$.
From $\primo$ we get
$$\sistema{2a+b+c+d=p\\ b=q\\ c=r\\ d=2k-p-q-r}
\quad\Leftrightarrow\quad\sistema{a=p-k\\ b=q\\ c=r\\ d=2k-p-q-r}\qquad\textrm{for}\ p>k$$
plus two more instances with $b=r,\ \{c,d\}=\{q,s\}$ and $b=s,\ \{c,d\}=\{q,r\}$,
always with an acceptable solution with $a=k-p$, so \primo\
gives $3$ realizations of $\pi$ if $p>k$ and none otherwise.
From $\secon$ we get instead
$$\sistema{2a+b+c=p\\ b=q\\ c+d=r\\ d=2k-p-q-r}
\quad\Leftrightarrow\quad\sistema{a=k-q-r\\ b=q\\ c=p+q+2r-2k\\ d=2k-p-q-r}\qquad\textrm{for}\ q+r<k$$
$$\sistema{2a+b+c=p\\ c+d=q\\ b=r\\ d=2k-p-q-r}
\quad\Leftrightarrow\quad\sistema{a=k-q-r\\ b=r\\ c=p+2q+r-2k\\ d=2k-p-q-r}\qquad\textrm{for}\ q+r<k,$$
(note that $p+2q+r-2k>p+q+2r-2k>0$),
$$\sistema{2a+b+c=p\\ c+d=q\\ d=r\\ b=2k-p-q-r}
\quad\Leftrightarrow\quad\sistema{a=p+r-k\\ b=2k-p-q-r\\ c=q-r\\ d=r,}$$
$$\sistema{c+d=p\\ 2a+b+c=q\\ b=r\\ d=2k-p-q-r}
\quad\Rightarrow\quad a=k-p-r\quad\Rightarrow\quad 
\begin{array}{l}
p+r<k\\ 
\textrm{(impossible),}
\end{array}
$$
$$\sistema{c+d=p\\ 2a+b+c=q\\ d=r\\ b=2k-p-q-r}
\quad\Leftrightarrow\quad\sistema{a=q+r-k\\ b=2k-p-q-r\\ c=p-r\\ d=r}\qquad\textrm{for}\ q+r>k,$$
$$\sistema{c+d=p\\ d=q\\ 2a+b+c=r\\ b=2k-p-q-r}
\quad\Leftrightarrow\quad\sistema{a=q+r-k\\ b=2k-p-q-r\\ c=p-q\\ d=q}\qquad\textrm{for}\ q+r>k$$
whence $1$ realization of $\pi$ for $q+r=k$ and $3$ otherwise.
Finally, from $\terzo$ we get
$$\sistema{a+c+d=p\\ a=q\\ b+c=r\\ b+d=2k-p-q-r}
\quad\Leftrightarrow\quad\sistema{a=q\\ b=k-p\\ c=p+r-k\\ d=k-q-r}\qquad\begin{array}{l}\textrm{for}\ p<k\\ \textrm{and}\ q+r<k,\end{array}$$
$$\sistema{a+c+d=p\\ b+c=q\\ a=r\\ b+d=2k-p-q-r}
\quad\Leftrightarrow\quad\sistema{a=r\\ b=k-p\\ c=p+q-k\\ d=k-q-r}\qquad\begin{array}{l}\textrm{for}\ p<k\\ \textrm{and}\ q+r<k,\end{array}$$
$$\sistema{a+c+d=p\\ b+c=q\\ b+d=r\\ a=2k-p-q-r}
\quad\Leftrightarrow\quad\sistema{a=2k-p-q-r\\ b=k-p\\ c=p+q-k\\ d=p+r-k}\qquad\textrm{for}\ p<k,$$
$$\sistema{b+c=p\\ a+c+d=q\\ a=r\\ b+d=2k-p-q-r}
\quad\Rightarrow\quad d=k-p-r\quad\Rightarrow\quad 
\begin{array}{l}
p+r<k\\
\textrm{(impossible),}
\end{array}$$
$$\sistema{b+c=p\\ a+c+d=q\\ b+d=r\\ a=2k-p-q-r}
\quad\Leftrightarrow\quad\sistema{a=2k-p-q-r\\ b=k-q\\ c=p+q-k\\ d=q+r-k}\qquad\textrm{for}\ q+r>k,$$
$$\sistema{b+c=p\\ b+d=q\\ a+c+d=r\\ a=2k-p-q-r}
\quad\Leftrightarrow\quad\sistema{a=2k-p-q-r\\ b=k-r\\ c=p+r-k\\ d=q+r-k}\qquad\textrm{for}\ q+r>k.$$
Since $q+r>k$ implies $p<k$, we conclude that \terzo\ yields $3$ realizations of $\pi$
for $p<k$ and $q+r\neq k$, only $1$ if $p<k$ and $q+r=k$, and none if $p\geqslant k$.
Noting that the conditions $q+r=k$ and $p=k$ are mutually exclusive, we
conclude that we have the number of realizations of $\pi$ is as follows:
$2$ if $q+r=k$ ($1$ from $\secon$ and $1$ from $\terzo$), then $3$ if $p=k$
(from $\secon$), and $6$ otherwise (always $3$ from $\secon$, plus $3$ from
$\primo$ is $p>k$ and $3$ from $\terzo$ if $p<k$. This is precisely what the statement says.

\medskip

To conclude the case $h=2$ we must take into account the
datum with repeated partitions $(S,S,12,3,[2,2,2,2,2,2],[5,3,2,2],[5,3,2,2])$,
for which the statement gives $\nu=3$, coming from the realizations
of the datum via the graphs
$\secon(1,2,1,2),\ \terzo(3,1,1,1),\ \terzo(2,1,2,1)$
according to the above argument.
So to verify
that our computation of $\nu=3$ is correct also in this case we must show that
these graphs are pairwise inequivalent under $\sim$, which amounts
to showing that by applying to each of them the last move
generating $\sim$ we always get the same graph again, and not one of the other two.
This is done in Fig.~\ref{53g0selfdual:fig}.
\begin{figure}
    \begin{center}
    \includegraphics[scale=.6]{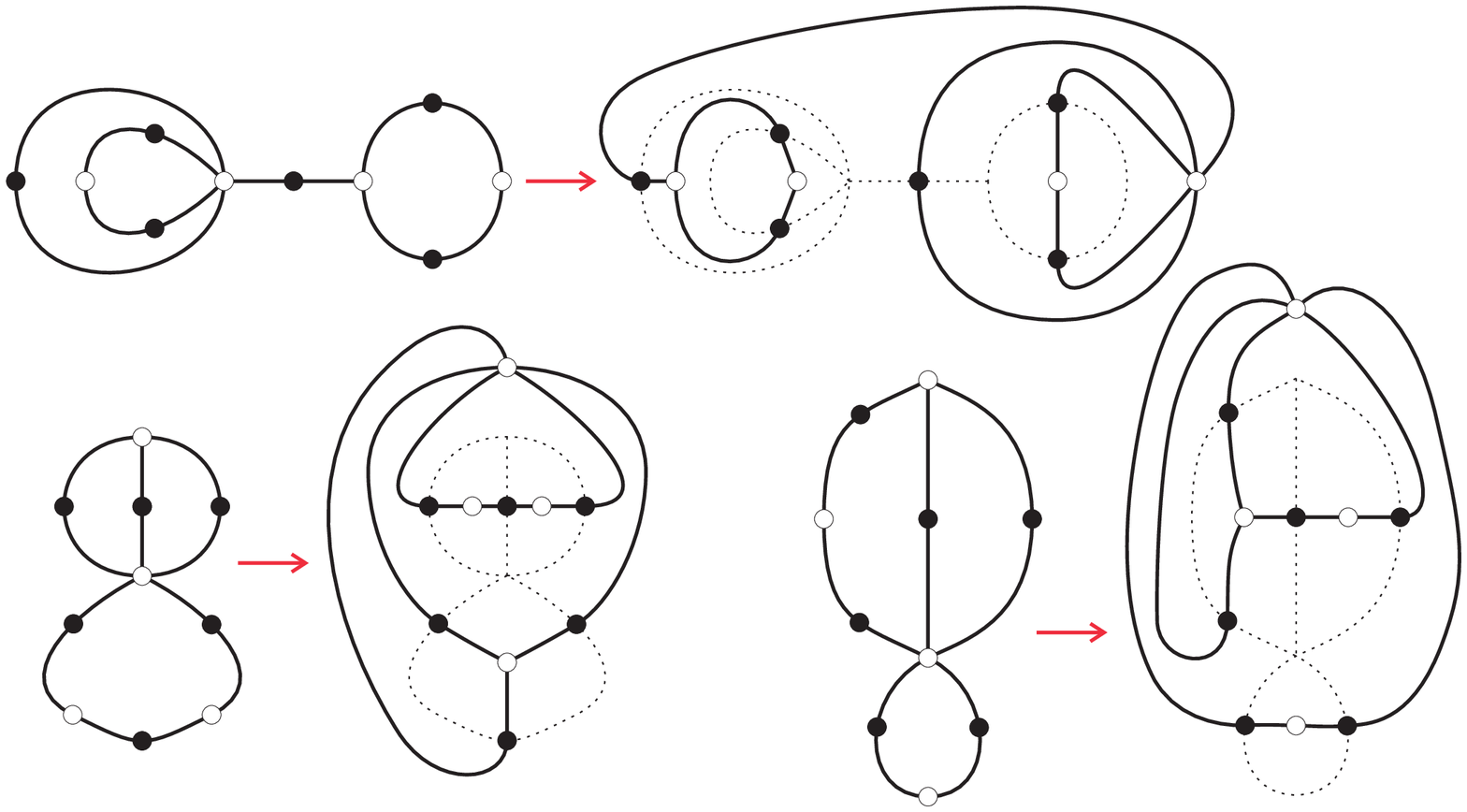}
    \end{center}
\mycap{Graphs mapped to themselves by the move generating $\sim$. \label{53g0selfdual:fig}}
\end{figure}

\section{Genus 1}\label{genus1:sec}
In this section we prove Theorem~\ref{genus1:thm}, starting
from the case $h=1$. We have to consider the embeddings in $T$ of the
$(3,3)$ graphs of Fig.~\ref{3133inS:fig} with a single disc as a region.
For the first graph, of course neither of the closed edges can be trivial
in $T$, so the situation must be as in Fig.~\ref{33inT:fig}-left,
\begin{figure}
    \begin{center}
    \includegraphics[scale=.6]{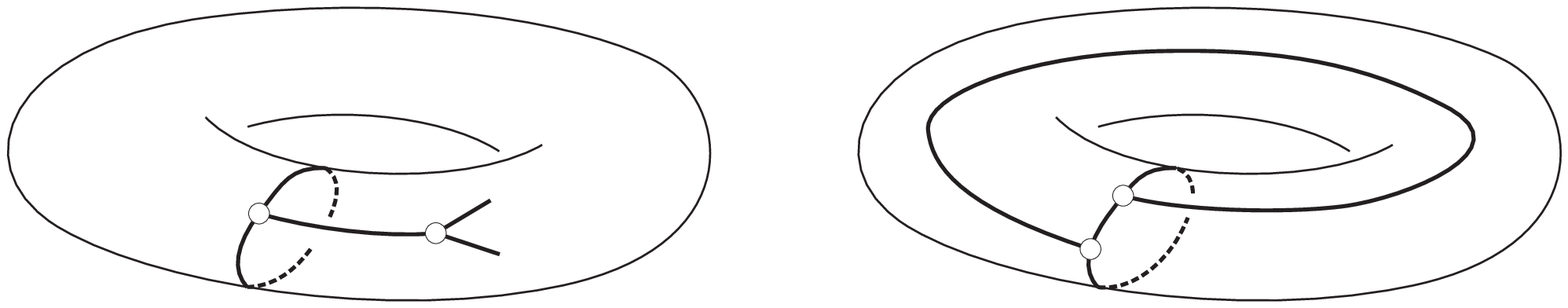}
    \end{center}
\mycap{The only $(3,3)$ graph that embeds in $T$ with a single disc as region. \label{33inT:fig}}
\end{figure}
but then the complement has disconnected boundary. One also easily sees that the second
graphs embeds only as shown in Fig.~\ref{33inT:fig}-right and with a full
$\permu_3\times\matZ/_2$ symmetry, so $\nu$ is the number of ways to write $k$ as
the sum of $3$ unordered positive integers, which is $\binom{k-1}{2}$ as claimed.

\bigskip

Let us now consider the case $h=2$, whence $\ell=2$. We first recall that
there are two abstract $(5,3)$ graphs, already shown in Fig.~\ref{53inS:fig}
(we can ignore here the graph in the centre, that
represents a different embedding in $S$ of that on the left).
We now claim that they have exactly the inequivalent embeddings in $T$
shown in Fig.~\ref{53inT:fig}
\begin{figure}
    \begin{center}
    \includegraphics[scale=.6]{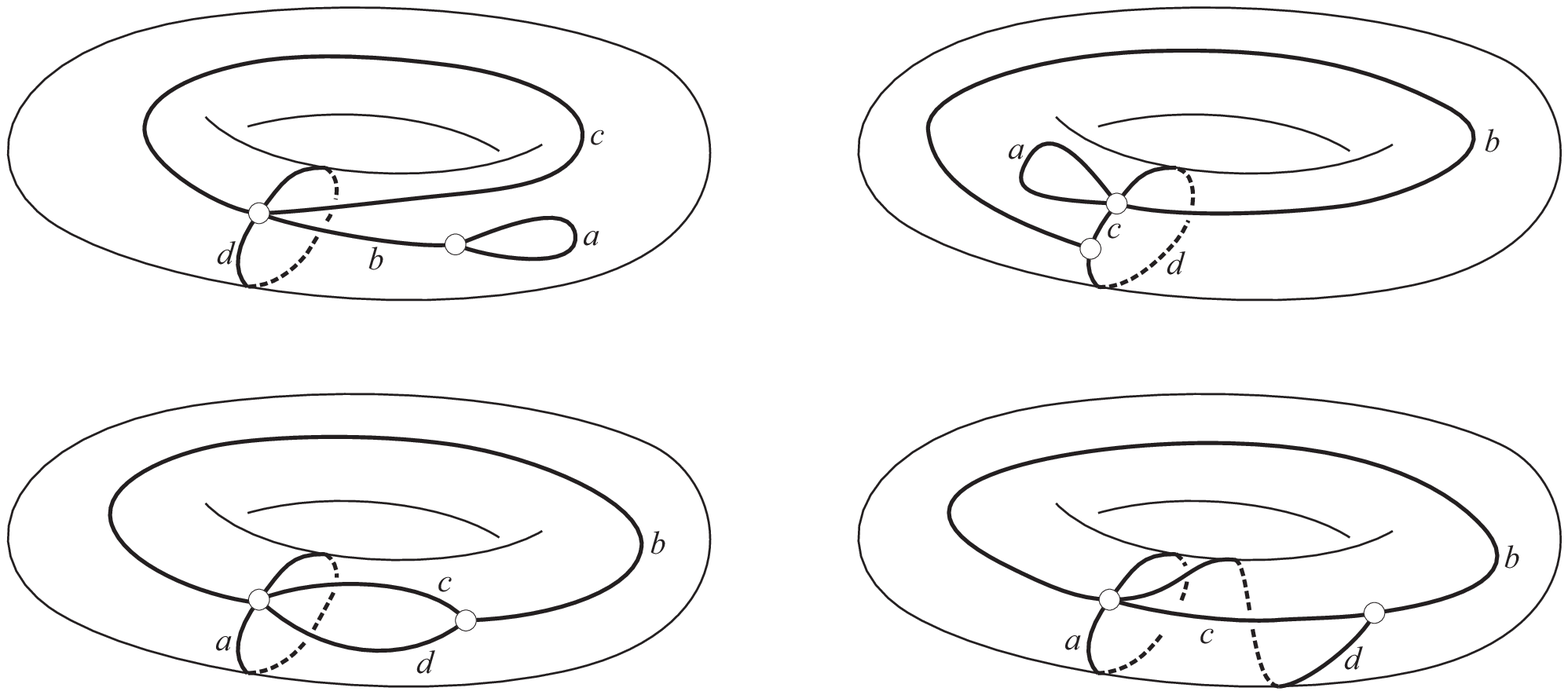}
    \end{center}
\mycap{The $(5,3)$ graphs in $T$ with two regions. \label{53inT:fig}}
\end{figure}
and denoted by $\primo(a,b,c,d)$, $\secon(a,b,c,d)$, $\terzo(a,b,c,d)$ and $\quart(a,b,c,d)$.
To this end, we first note that the graph of Fig.~\ref{53inS:fig}-left cannot embed with
the loop $b$ being non-trivial in $T$, otherwise one of the regions would
not be a disc.  Knowing that $b$ is trivial we easily get the only embedding
$\primo(a,b,c,d)$. Turning to the graph of Fig.~\ref{53inS:fig}-right, we ask ourselves
whether the loop $a$ is trivial in $T$ or not. If it is, we easily get $\secon(a,b,c,d)$.
If $a$ is non-trivial, the edges $a,b,c$ cannot leave the vertex of $a$ from the same side,
so assume $b$ leaves from one side and $c,d$ from the opposite side. It is then easy to see
that the only possibilities are $\terzo(a,b,c,d)$ and $\quart(a,b,c,d)$.

We next claim that all four graphs have the only symmetry $c\leftrightarrow d$.
Checking that there cannot be any other one is immediate, while we prove
that $c\leftrightarrow d$ exists for $\quart(a,b,c,d)$, which is the hardest case
to visualize. Note that, with suitable orientations, the attaching maps of the
complementary discs to $\quart(a,b,c,d)$ are described by the words
$bc^{-1}db^{-1}a^{-1}$ and $acd^{-1}$. Now we consider the automorphism of
$\quart(a,b,c,d)$ that maps $a,b,c,d$ to $a^{-1},b,d,c$. This transforms the
attaching words into $bd^{-1}cb^{-1}a$ and $a^{-1}dc^{-1}$, whose inverses are
$a^{-1}bc^{-1}db^{-1}$ and $cd^{-1}a$, which are cyclically identical to
the initial ones. So the automorphism of $\quart(a,b,c,d)$ extends to $T$,
and we are done.

To count the realizations of a partition $(p,2k-p)$ with $p\leqslant k$, we note that
$\primo(a,b,c,d)$ and $\secon(a,b,c,d)$
both realize $[a+2b+2c+2d,a]$, while $\terzo(a,b,c,d)$
realizes $[2a+2b+c+d,c+d]$, and $\quart(a,b,c,d)$ realizes $[a+2b+c+d,a+c+d]$.
The conclusion is now easy. Of course $\nu=0$ if $p=k$, while for $p<k$
in $\primo$ and $\secon$ we must have $a=p$, so they both contribute with
$$\sum_{b=1}^{k-p-2}\left[\frac{k-p-b}2\right]=\left[\frac14(k-p-1)^2\right];$$
in $\terzo$ we must have $c+d=p$ and $a+b=k-p$, whence
$$\left[\frac p2\right]\cdot (k-p-1);$$
finally, in $\quart$ we must have $a+c+d=p$ and $b=k-p$, whence
$$\sum_{a=1}^{p-2}\left[\frac{p-a}2\right]=\left[\frac14(p-1)^2\right].$$
To conclude the case $h=2$ we need to consider the datum with repetitions $$(T,S,8,3,[2,2,2,2],[5,3],[5,3]),$$
but the previous discussion implies that only the graph $\quart(1,1,1,1)$ can realize it,
so the value $\nu=1$ already obtained is correct.  In particular, the graph must be mapped to itself
by the last move generating $\sim$, which one can verify rather easily.

\section{Genus 2}\label{genus2:sec}
We start by showing in Fig.~\ref{73abstract:fig}
\begin{figure}
    \begin{center}
    \includegraphics[scale=.6]{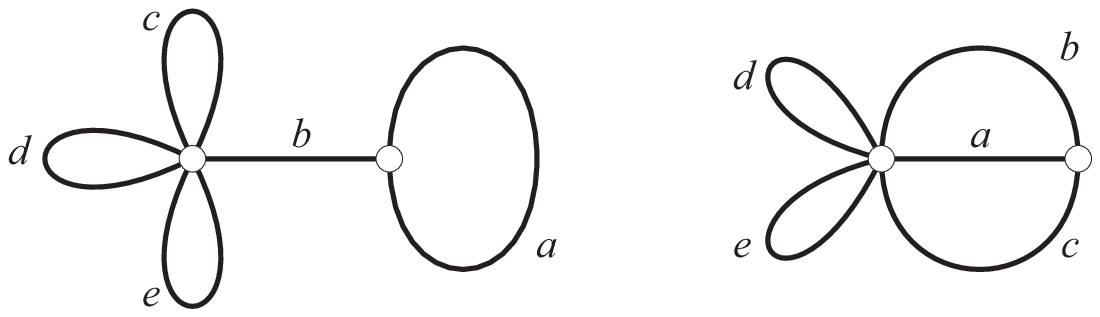}
    \end{center}
\mycap{The abstract $(7,3)$ graphs. \label{73abstract:fig}}
\end{figure}
the only two abstract $(7,3)$ graphs, $\Gamma_1$ and $\Gamma_2$.
Next, we note that $\Gamma_1$ cannot embed in the genus-2 surface $2T$ with
a single disc discal region: if $a$ bounds in $2T$ a disc disjoint from the
rest of $\Gamma_1$, then there is more than one region, otherwise
a regions is non-discal.

We must then enumerate up to symmetry
the embeddings of $\Gamma_2$ in $2T$ with a single region.
We do so by describing the fattenings of $\Gamma_2$ to a ribbon having
a single boundary component attaching a disc to which we get $2T$.
Note that a fattening of a graph can be described by an immersion in the
plane. There are only two fattenings of the subgraph
$\theta=a\cup b\cup c$ of $\Gamma_2$, shown in Fig.~\ref{planarthetas:fig}
\begin{figure}
    \begin{center}
    \includegraphics[scale=.6]{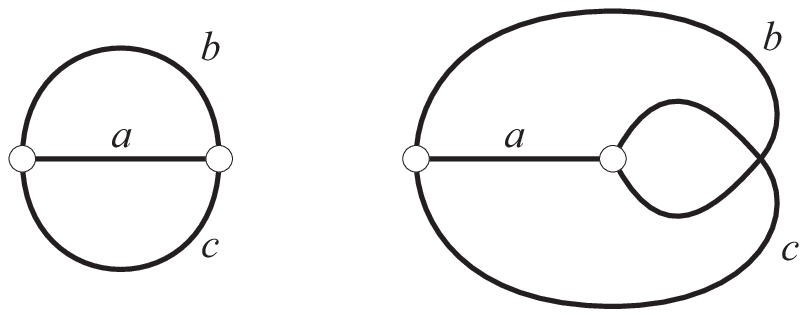}
    \end{center}
\mycap{Fattenings of the graph $\theta$. \label{planarthetas:fig}}
\end{figure}
and both seen to be totally symmetric. We now concentrate on the fattenings of
the subgraph $\Delta$ of $\Gamma_2$ given by a regular neighbourhood
of the $7$-valent vertex union $d\cup e$. One easily sees that there
are $12$ fattenings of $\Delta$, but $7$ of them cannot give rise
to fattenings of $\Gamma_2$ with a connected boundary, because a small
boundary circle is already created near $\Delta$.  The other $5$
fattenings of $\Delta$ are shown in Fig.~\ref{fatDelta:fig}.
\begin{figure}
    \begin{center}
    \includegraphics[scale=.6]{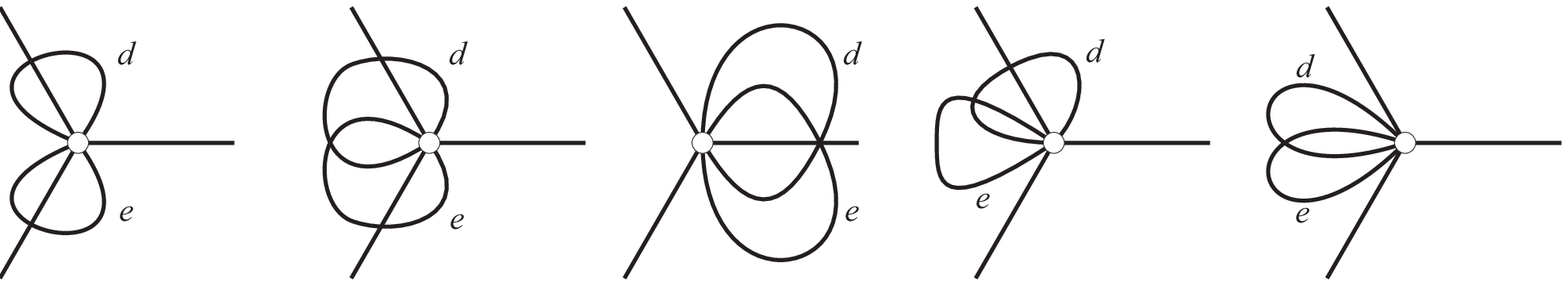}
    \end{center}
\mycap{Fattenings of $\Delta$ without small boundary circles. \label{fatDelta:fig}}
\end{figure}
We must now combine the $2$ fattenings of $\theta$
with the $5$ of $\Delta$, and thanks to the stated symmetry of those of $\theta$ there is
only one way to combine any given pair of fattenings. Of the resulting $10$ ribbons,
$4$ turn out to have disconnected boundary, and the other $6$ are shown in Fig.~\ref{2Tribbons:fig}.
\begin{figure}
    \begin{center}
    \includegraphics[scale=.6]{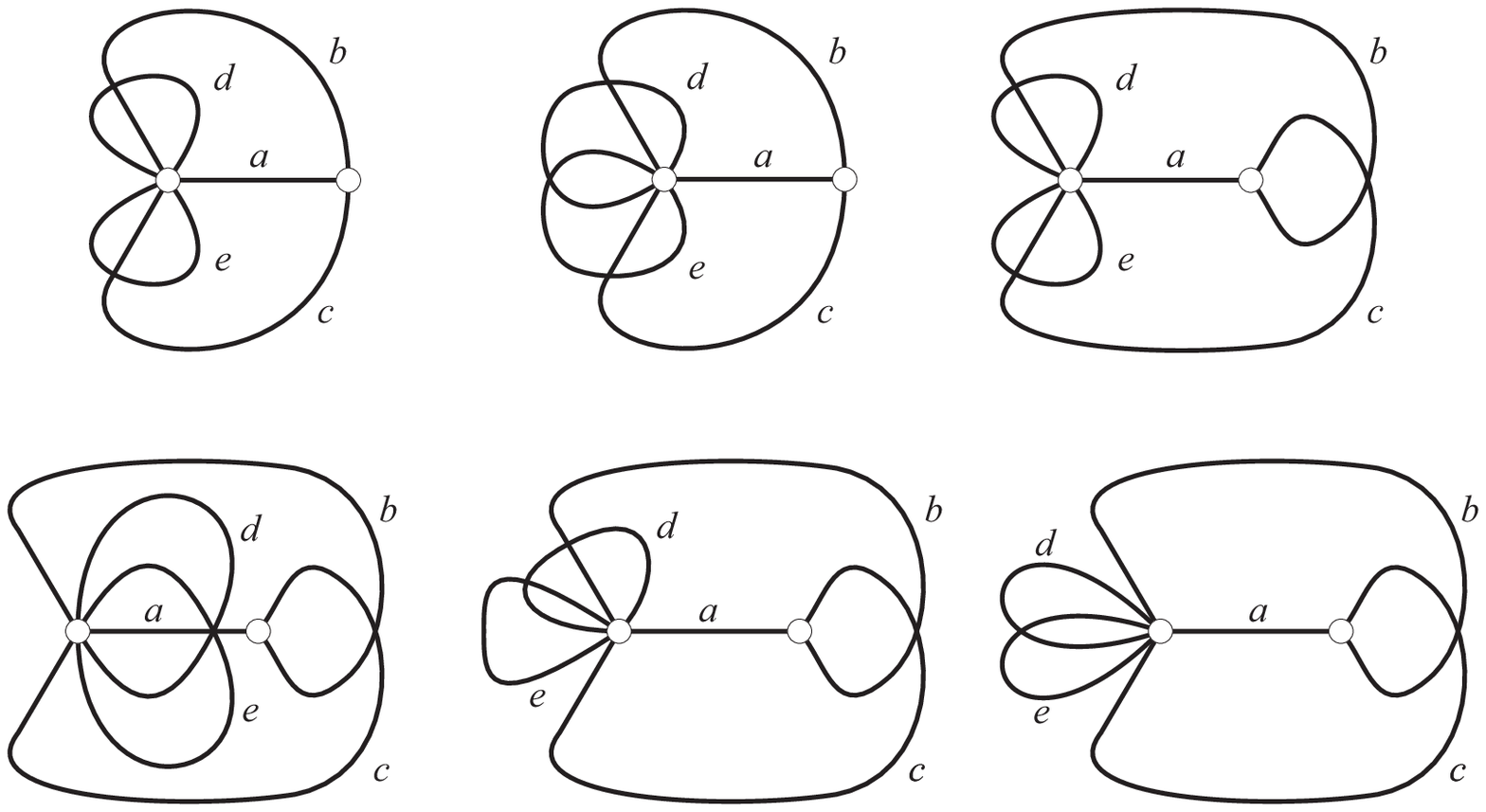}
    \end{center}
\mycap{Fattenings of $\Gamma_2$ with a single boundary circle, capping off which we get $2T$. \label{2Tribbons:fig}}
\end{figure}

Since $5$ of the graphs of Fig.~\ref{2Tribbons:fig}
have a symmetry  of type $(b\leftrightarrow c,d\leftrightarrow e)$ and one has no symmetries,
the number $\nu$ of realizations of the datum
is given by $5$ times the number $x$ of ways to express $k$ as $a+b+c+d+e$ up to $(b\leftrightarrow c,d\leftrightarrow e)$,
plus the number $y$ of ways to express $k$ as $a+b+c+d+e$ with no symmetries to take into account.
Of course $y=\binom{k-1}4$, while
$$x=\sum_{a=1}^{k-4}z(k-a)=\sum_{h=4}^{k-1}z(h),$$
where $z(h)$ is the number of
ways to express $h$ as $b+c+d+e$ up to $(b\leftrightarrow c,d\leftrightarrow e)$.

We can now compute $z(h)$ by distinguishing the case $b<c$ from $b=c,\ d<e$ and from $b=c,\ d=e$.
For the case $b<c$ we can choose $j=b+c+d$ between $4$ and $h-1$, then $i=b+c$ between $3$ and $j-1$,
and we are left with $\left[\frac{i-1}2\right]$ choices for $b$ and $c$, so we have a contribution to $z(h)$ equal to
$$\sum_{j=4}^{h-1}\sum_{i=3}^{j-1}\left[\frac{i-1}2\right]=
\sum_{j=4}^{h-1}\sum_{i=2}^{j-2}\left[\frac i2\right]=
\sum_{j=4}^{h-1}\left[\left(\frac{j-2}2\right)^2\right]=
\sum_{j=4}^{h-1}\left[\left(\frac j2-1\right)^2\right].$$
We can now distinguish between the odd case $h=2m+1$ and the even case $h=2m$,
splitting the sum between the odd and the even values of $j$, and getting respectively, after easy calculations,
$$\frac16m(m-1)(4m-5)\qquad\textrm{and}\qquad\frac16(m-1)(m-2)(4m-3).$$

For the case $b=c,\ d<e$ we can choose $b=c$ between $1$ and $\left[\frac{h-3}2\right]$, so
we have a contribution to $z(h)$ equal to
\begin{eqnarray*}
 & & \sum_{b=1}^{\left[\frac{h-3}2\right]}\left[\frac{h-2b-1}2\right]=
  \sum_{b=1}^{\left[\frac{h-3}2\right]}\left(\left[\frac{h-1}2\right]-b\right) \\
 &=& \left[\frac{h-1}2\right]\cdot \left[\frac{h-3}2\right]-\frac12\left[\frac{h-3}2\right]\left(\left[\frac{h-3}2\right]+1\right) \\
 &=& \frac12\left[\frac{h-3}2\right]\left(2\left(\left[\frac{h+3}2\right]+1\right)-
    \left(\left[\frac{h-3}2\right]+1\right)\right) \\
 &=&\frac12\left[\frac{h-3}2\right]\left(\left[\frac{h-3}2\right]+1\right).
\end{eqnarray*}
Therefore we have a contribution to $z(h)$ for $h=2m+1$ and for $h=2m$ given respectively by
$$\frac12m(m-1)\qquad\textrm{and}\qquad\frac12(m-2)(m-1).$$

Finally, for $b=c,\ d=e$ we have a contribution of $0$ for odd $h$ and of
$m-1$ for $h=2m$.

We can now plug these contributions to $z(h)$ in the formula for $x$. Again we distinguish between the case
$k=2p+1$ and the case $k=2p$, splitting the sum between the odd $h=2m+1$ and the even $h=2m$, getting
respectively
\begin{eqnarray*}
& &\sum_{m=2}^{p-1}\left(\frac16m(m-1)(4m-5)+\frac12m(m-1)\right)\\
& & + \sum_{m=2}^p\left(\frac16(m-1)(m-2)(4m-3)+\frac12(m-1)(m-2)+(m-1)\right)\\
& = & \frac16(2p^4-6p^3+7p^2-3p)
\end{eqnarray*}
and
\begin{eqnarray*}
& &\sum_{m=2}^{p-1}\left(\frac16m(m-1)(4m-5)+\frac12m(m-1)\right)\\
& &+\sum_{m=2}^{p-1}\left(\frac16(m-1)(m-2)(4m-3)+\frac12(m-1)(m-2)+(m-1)\right)\\
&=& \frac16(2p^4-10p^3+19p^2-17p+6).
\end{eqnarray*}
Replacing $p=\frac12(k-1)$ in the first formula and $p=\frac k2$ in the second one we get
respectively
\begin{eqnarray*}
x_{\textrm{odd}} & = &
\frac1{48}\left(k^4-10k^3+38k^2-62k+33\right),\\
x_{\textrm{even}} & = & \frac1{48}\left(k^4-10k^3+38k^2-68k+48\right).
\end{eqnarray*}
Recalling that $\nu=5x+y$ and replacing the expressions just found for $x$ and $y=\binom{k-1}4$ we get
\begin{eqnarray*}
\nu_{\textrm{odd}} & = & \frac1{48}\left(7k^4-70k^3+260k^2-410k+213\right)\\
\nu_{\textrm{even}} & = & \frac1{48}\left(7k^4-70k^3+260k^2-440k+288\right).
\end{eqnarray*}
Thus
$$\nu=\nu_{\textrm{even}}+2\left(\frac k2-\left[\frac k2\right]\right)\left(\nu_{\textrm{odd}}-\nu_{\textrm{even}}\right)$$
and the stated formula easily follows.

\noindent
Dipartimento di Matematica\\
Universit\`a di Pisa\\
Largo Bruno Pontecorvo, 5\\
56127 PISA -- Italy\\
\texttt{petronio@dm.unipi.it}

\end{document}